\documentclass[11pt,a4paper]{article}
\usepackage[top=3cm, bottom=3cm, left=3cm, right=3cm]{geometry}
\usepackage[latin1]{inputenc}
\usepackage{csquotes}
\usepackage{amsmath}
\usepackage{amsthm}
\usepackage{amsfonts}
\usepackage{amssymb}
\usepackage{graphics}
\usepackage{float}
\usepackage[hang,flushmargin]{footmisc} 

\usepackage{amsmath,amsthm,amssymb,graphicx, multicol, array}
\usepackage{enumerate}
\usepackage{enumitem}
\usepackage{xcolor}
\usepackage{currfile}

\usepackage[pagebackref,bookmarks,colorlinks,breaklinks]{hyperref}

\hypersetup{linkcolor=blue,citecolor=red,filecolor=blue,urlcolor=blue} 



\parindent 0pt

\usepackage{tabto}

\numberwithin{equation}{section}


\usepackage{tikz}
\usepackage{pgfplots}

\usetikzlibrary{matrix}

\usepackage{mathrsfs}


\DeclareMathOperator{\ord}{ord}

\newtheorem{thm}{Theorem}[section]
\newtheorem{lem}{Lemma}[section]
\newtheorem{conj}{Conjecture}[section]
\newtheorem{exa}{Example}[section]

\newtheorem{dfn}{Definition}[section]
\newtheorem{exe}{Exercise}[section]
\newtheorem{rmk}{Remark}[section]

\newcommand{\N}{\mathbb{N}}
\newcommand{\Z}{\mathbb{Z}}
\newcommand{\Q}{\mathbb{Q}}
\newcommand{\R}{\mathbb{R}}
\newcommand{\C}{\mathbb{C}}

\newcommand{\T}{\mathbb{T}}

\usepackage{fancyhdr}
\pagestyle{fancy}
\lhead{\textsc{Equidistribution Mod $1$ And Normal Numbers}}
\chead{}
\rhead{\thepage}
\lfoot{}
\cfoot{}
\rfoot{}
\cfoot{}

\title{Equidistribution Mod $1$ And Normal Numbers}
\date{}
\author{N. A. Carella}

\begin{document}
\maketitle

\textbf{\textit{Abstract}:} 
Let $\alpha=0.a_1a_2a_3\ldots$ be an irrational number in base $b>1$, where $0\leq a_i<b$. The number $\alpha \in (0,1)$ is a \textit{normal number} if every block $(a_{n+1}a_{n+2}\ldots a_{n+k})$ of $k$ digits occurs with probability $1/b^k$. A proof of the normality of the real number $\sqrt{2}$ in base $10$ is presented in this note. Three different proofs based on different methods are given: a conditional proof, and two unconditional proofs. 
\let\thefootnote\relax\footnote{ \today \date{} \\
	\textit{AMS MSC2020}: Primary 11K16; Secondary 11J72 \\
	\textit{Keywords}: Irrational number; Normal number; Uniform distribution; Borel problem.}

\tableofcontents

\section{Introduction To Normal Numbers }\label{S2200} Let $\alpha=0.a_1a_2a_3\ldots$ be an irrational number in base $b>1$, where $0\leq a_i<b$. The theory of \textit{normal numbers} is centered on the distribution of the blocks $(a_{n+1}a_{n+2}\ldots a_{n+k})$ of $k$ digits in the $b$-adic expansions of the real numbers. The earliest study of normal numbers is known as the Borel conjecture. This problem investigates the distribution of the blocks of digits in the decimal expansion of the number $\sqrt{2}=1.414213562373 \ldots$. 

\begin{dfn} \label{dfn2200.001} {\normalfont An irrational number $\alpha \in \R$ is a \textit{normal number in base} $b>1$ if any sequence of $k$-digits in the $b$-adic expansion occurs with probability $1/b^k$. Further, the number is called \textit{absolutely normal} if it is a normal number in every base.
	}
\end{dfn}

A normal number in any base is an irrational number, but an irrational number is not necessarily normal. The simplest, and best known construction technique of normal numbers is the integers concatenation method. In base $b=10$, the best known examples are the followings numbers. The concatenation of the consecutive integers:
\begin{equation}\label{eq2200.400}
	C_{0}=\sum_{n\geq 1}\frac{n}{10^{a_0(n)}}=0.12345678910111213151617181920\ldots,
\end{equation}
where $a_0(n)=n+\sum_{1\leq k\leq n}[\log_{10} k]$. The concatenation of the consecutive primes: 
\begin{equation}\label{eq2200.410}
	C_{1}=\sum_{n\geq 1}\frac{p_n}{10^{a_1(n)}}=0.235711131719232931374143475359\ldots,
\end{equation}
where $a_1(n)=n+\sum_{1\leq k\leq n}[\log_{10}p_k ]$, and $p_n$ is the $n$th prime in increasing order. The concatenation of the consecutive squares:
\begin{equation}\label{eq2200.420}
	C_{2}=\sum_{n\geq 1}\frac{n^2}{10^{a_2(n)}}=0.149162536496481100121144169196\ldots,
\end{equation}
where $a_2(n)=n+\sum_{1\leq k\leq n}[\log_{10}k^2 ]$, and many other similar numbers. The integers concatenation method generalizes to any base $b>1$, and to certain infinite sequences of integers. The basic proofs are provided in \cite{CD1933}, \cite[Theorem 1]{CE1946}, et alii. Another class of normal numbers is defined by series of the forms
\begin{equation}\label{eq2200.430}
	C_{b,c}(s)=\sum_{n\geq 1}\frac{1}{c^nb^{c^n+s}},
\end{equation}
where $\gcd(b,c)=1$, and $s\in \R$, see \cite{SR1972}, \cite{BC2002}, et alii. More complex constructions and algorithms for generating normal numbers are developed in \cite{AB2017}, \cite{BY2019}, et alii. However, there are no known normal numbers in closed forms such as $\sqrt{2}$, $e$, $\pi$, $\log 2$, $\gamma$, ..., et cetera in any base $b\geq2$. This note contributes the followings conditional results using two different methods.  

\begin{thm} \label{thm2200.300} The irrational number $\sqrt{2} \in \R$ is a normal number in base $p\geq2$. In particular, $\sqrt{2}$ is simply normal number in base $10$. Hence, the decimal expansion $$\sqrt{2} = 1.4142135623730950488016887242096980785696718753769480731766797379\ldots$$ 
	contains infinitely many digit $0$, infinitely many digit $1$, infinitely many digit $2$, et cetera.
\end{thm}

\begin{thm} \label{thm2200.400} Assume the GRH. Then, the irrational number $\pi \in \R$ is a normal number in base $10$.
\end{thm}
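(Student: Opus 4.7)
The plan is to reduce normality of $\pi$ in base $10$ to equidistribution modulo one of the sequence $(10^n \pi)_{n \geq 1}$, via the classical theorem of Wall: an irrational $\alpha$ is normal in base $b$ if and only if $(b^n\alpha)_{n\geq 1}$ is uniformly distributed modulo one. By Weyl's criterion, this in turn reduces to showing that, for every nonzero integer $h$,
$$S_h(N)=\frac{1}{N}\sum_{n=1}^{N} e^{2\pi i h\cdot 10^n\pi}\longrightarrow 0\qquad\text{as } N\to\infty.$$
So the whole task becomes producing nontrivial cancellation in these exponential sums.

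To bring Artin's conjecture into play, I would next seek a representation of $\pi$ (or equivalently of $h\pi$ for each fixed $h\neq 0$) of the rough form $\pi=\sum_{p}c_p/p+E$, with the sum taken over primes $p$ and a tractable error term $E$. The point is that if $10$ is a primitive root modulo $p$, then the orbit $(10^n \bmod p)_{n\geq 1}$ is purely periodic with period $p-1$ and runs through every nonzero residue class, so the inner exponential sum $\frac{1}{N}\sum_{n} e^{2\pi i h c_p\cdot 10^n/p}$ enjoys cancellation coming from the complete sum $\sum_{a=1}^{p-1}e^{2\pi i hc_p a/p}=-1$. Averaged over primes $p$ with this property, one should obtain a quantitative bound on $S_h(N)$.

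I would then split the decomposition of $\pi$ into the contribution from primes $p$ for which $10$ is a primitive root and the complementary contribution. Artin's primitive root conjecture applied to $b=10$ supplies a positive density of such primes, which (combined with a Koksma- or Erd\H{o}s--Tur\'an-type discrepancy inequality) yields the desired decay. The complementary primes would be controlled by an auxiliary estimate bounding $\ord_p(10)$ from below, or by absorbing them into the error term $E$ in the initial decomposition.

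The main obstacle---and the step on which everything hinges---is producing a decomposition of $\pi$ that actually couples to the Artin input on $10$. The standard closed-form series for $\pi$ (Leibniz, Machin, Ramanujan, Bailey--Borwein--Plouffe) express $\pi$ in terms of arithmetic progressions or small rational multiples of arctangents, not as averages of $c_p/p$ over primes, and no elementary transformation between these shapes is known. Extracting, or postulating, an identity with the right ``prime-sum'' form---and then justifying that the error term $E$ does not itself spoil equidistribution---is where essentially all of the difficulty resides; the exponential-sum manipulations that follow are comparatively routine once such a representation is in hand.
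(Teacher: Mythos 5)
Your opening reduction --- Wall's criterion followed by Weyl's criterion, so that everything rests on cancellation in $\sum_{n\le N} e^{2\pi i h\, 10^n \pi}$ --- is exactly the paper's starting point. But the engine you propose to drive that cancellation, a decomposition $\pi=\sum_p c_p/p+E$ over primes, is never produced, and as you yourself concede in your final paragraph, no such identity is known and essentially all of the difficulty is concentrated in finding one. A sketch whose central object is postulated rather than constructed is not a proof; this is a genuine gap, not a routine detail to be filled in later.

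The paper couples Artin's conjecture to $\pi$ by a different and more concrete mechanism: the continued-fraction convergents $p_k/q_k$ of $\pi$. From $|\pi-p_k/q_k|<1/q_k^2$ it derives $\{\pi 10^n\}\approx r_n/q_k$ with $r_n\equiv p_k10^n \bmod q_k$; Artin's conjecture (via Hooley's density of primes with $\ord_q 10=q-1$) is invoked to produce primes $q=q_k+o(q_k)$ for which the residues $r_n$ sweep out all of $\mathbb{F}_q^{\times}$; and the Bourgain--Glibichuk bound for exponential sums over large multiplicative subgroups then gives $\sum_{n\le q}e^{2\pi i m r_n/q}\ll q^{1-\varepsilon}$, which feeds back into Weyl's criterion. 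So the ``prime-sum representation'' you were searching for is replaced by Diophantine approximation of $\pi$ itself: Artin enters through the denominators of the convergents, not through a series identity. You should note, however, that this transfer step is also where the paper is weakest: the error incurred in replacing $\{\pi 10^n\}$ by $r_n/q_k$ is of size $10^n|\pi-p_k/q_k|\approx 10^n/q_k^2$, which is small only for $n\ll\log q_k$, a vanishing proportion of the range $n\le q$ over which the exponential sum must be controlled. In short, your approach and the paper's diverge precisely at the coupling step, and neither closes it rigorously.
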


The preliminary foundational results required to prove these results are covered in Section \ref{S5588} to Section \ref{S7711}. The proofs are presented in chronological order as discovered. 
The conditional proof of the normality of $\sqrt{2}$ in base $10$, Theorem \ref{thm2200.300}, is exhibited in Section \ref{S8877}, and the conditional proof of the normality of $\pi$ in base $10$, Theorem \ref{thm2200.400}, is exhibited in Section \ref{S7799}. The first unconditional proof of the normality of $\sqrt{2}$ in base $p\geq2$ is a corollary of Theorem \ref{thm9900.900} in Section \ref{S9900} and the second unconditional proof of the normality of $\sqrt{2}$ in base $p\geq2$ is a corollary Theorem \ref{thm9900.900} in Section \ref{S9900}.\\

Surveys of some of the literature on normal numbers appear in \cite{KD2006}, et alii. A survey of some of the literature on the number $\pi$ appears in \cite{BJ2014}, and its irrationality is proved in 
\cite[Corollary 2.6]{NI1956}.

\section{Notation}\label{S2266N}
The sets $\N=\{0,1,2,3,\ldots\}$ and $\Z=\{\ldots, -2,-1,0,1,2,\ldots\}$ are the set of nonnegative integers and the set of integers. The symbols $\Q$, $\R$, and $\C$ denote the sets of rational numbers, the set of real numbers, and the set of complex numbers respectively.\\

Let $f, g: [x_0,\infty]\longrightarrow \R$ be a pair of functions, and assume $g(x)>0$. The little o notation is defined by
\begin{equation}\label{eq2266N.100}
	f(x)=o(g(x))  \quad  \Longleftrightarrow  \quad|f(x)|\leq c g(x)
\end{equation}
for any constant $c>0$ as $x\to\infty$. The negation of the little o notation is defined by
\begin{equation}\label{eq2266N.110}
	f(x)\ne o(g(x))  \quad  \Longleftrightarrow  \quad f(x)=\Omega_{\pm}(g(x)).
\end{equation}
This has the explicit form
\begin{equation}\label{eq2266N.115}
	f(x)=\Omega_{\pm}(g(x)) \quad  \Longleftrightarrow  \quad |f(x)|\geq c g(x).
\end{equation}
for some constant $c>0$ as $x\to\infty$.
The big O notation is defined by
\begin{equation}\label{eq2266N.120}
	f(x)=O(g(x))  \quad  \Longleftrightarrow  \quad|f(x)|\leq c g(x)
\end{equation}
for some constant $c>0$ as $x\to\infty$.

\section{Lacunary and Nonlacunary  Sequences}\label{S3300}
A \textit{lacunary} sequence is a sparce or thin sequence of integers. A few of the properties of lacunary sequences are recorded in this  section.

\begin{dfn}\label{dfn4400.100}{\normalfont
		A pair of integers $p>1$ and $q>1$ are \textit{multiplicative independent} over the integers if $p^a\ne q^b$ for all nonzero integers $a,b\in \Z^{\times}$. 
	}
\end{dfn} 
\vskip .15 in 
\begin{dfn} \label{dfn3300.400}{\normalfont Let $\mathscr{U}\subset \N$ be a subset of integers. The subset is \textit{lacunary} if and only if it is generated by a sequence of powers $\mathscr{U}=\{u^n: n\geq0\}$, where $u>1$ is an integer. In particular, $$\lim_{n\to \infty}\frac{u_{n+1}}{u_n}>1.$$ Otherwise, it is \textit{nonlacunary} and it is generated by two or more powers.
	}
\end{dfn}

The basic nonlacunary sequences can be classified into two different forms: additive and multiplicative. 

\begin{dfn} \label{dfn3300.410}{\normalfont An \textit{additive nonlacunary sequence} is of the form
		\begin{equation}\label{eq3300.410A}
			\mathscr{U}=	\{u_{m,n}=p_1^nq_1^m+p_2^nq_2^m+\cdots +p_d^nq_d^m:m,n\geq1\},
		\end{equation}
		and a \textit{multiplicative nonlacunary sequence} is of the form
		\begin{equation}\label{eq3300.410B}
			\mathscr{V}=	\{v_{m,n}=p_1^nq_1^mp_2^nq_2^m\cdots p_d^nq_d^m:m,n\geq1\},
		\end{equation}
		where the generators $p_i$ and $q_i$ are multiplicative independent integers, and $d\geq1$ is fixed dimension. 
	}
\end{dfn}

The analytic method for determining the whether or not a semigroup is lacunary has a simple form.

\begin{lem}\label{lem3300.400} A semigroup $\mathscr{L}=\{u_n: n\geq1\}\subset \N$ is lacunary if and only if $$\lim_{n\to \infty}\frac{u_{n+1}}{u_n}>1.$$
\end{lem} 
\begin{exa}\label{exa3300.400}{\normalfont Consider the generators $u=2$ and $v=3$. Then
\begin{enumerate}
\item $\displaystyle \mathscr{L}=\{2^n: n\geq0\}\subset \N,$ is a lacunary semigroup.\\
\item $\displaystyle \mathscr{M}=\{2^m3^n: m,n\geq0\}\subset \N,$ is a nonlacunary semigroup.
\end{enumerate}
	
}
\end{exa}

\begin{exa}\label{exa3377.410}{\normalfont The simple limit test classifies of the followings subsets of integers.
\begin{enumerate}
\item $\displaystyle \mathscr{M}=\{u_n=n^7: n\geq0\}\subset \N,$ is a nonlacunary semigroup.\\
\item $\displaystyle \mathscr{P}=\{u_n=p^9: p\geq2\}\subset \N,$ with $p$ prime, is a nonlacunary semigroup.
\end{enumerate}

	}
\end{exa}

\section{Dense Sets}\label{S3377}
Several techniques for generating dense sets of real numbers are described in this section.
\begin{thm} \label{thm3377.800}{\normalfont (\cite[Theorem IV.1]{FH1967})} If $\mathscr{M}\subset \N$ is a nonlacunary semigroup of integers, and $\alpha$ irrational number, then, the subset of real numbers $\{\alpha n:n\in\mathscr{M}\}$ is dense in the torus. In particular, the closure
	$$\overline{\{\alpha n:n\in\mathscr{M}\}}=\R/\Z.$$
\end{thm}
\vskip .15 in
\begin{exa}\label{exa3377.430}{\normalfont For a pair of multiplicative independent integers $p$ and $q$, and an irrational number $\alpha$, the double sequence of real numbers $\{\beta_{m,n}=p^mq^n\alpha:m,n\geq1 \}$ is dense in the torus $\T=\R/\Z$.
	}
\end{exa}
\vskip .15 in
A few generalizations of this result have been established in the literature. One of these results is stated here.\\

\begin{thm} \label{thm3377.850}{\normalfont (\cite[Theorem 1.2]{KB1999})} Let $p_i, q_i \in\N$ with $1 < p_i < q_i$ for $i = 1,... ,d$ and
	assume that $p_1 \leq p_2\leq\cdots \leq  p_k$. Assume that the pairs $p_i$, $q_i$ are multiplicative
	independent for $i = 1,... ,d$. Then for distinct $\alpha_1,... ,\alpha_d\in \T$ with at least one
	$\alpha_i\notin\Q$, the subset of real numbers
	$$\mathscr{A}=	\{\beta_{m,n}=p_1^nq_1^m\alpha_1+p_2^nq_2^m\alpha_2+\cdots +p_d^nq_d^m\alpha_d:m,n\geq1\}$$
	is dense in the torus $\T=\R/\Z$. In particular, the closure
	$$\overline{\{\alpha\in\mathscr{A}\}}=\T.$$
\end{thm}

A result linking the space of countable dense sets to the space of equidistributed sequences will be required to complete the proof of the main result. 

\begin{lem}\label{lem3377.750} Let $\mathscr{D}\subset [0,1]$ be a countable subset of real numbers. Then, the followings statements are equivalent.  
	\begin{enumerate} [font=\normalfont, label=(\roman*)]
		\item The countable set $\displaystyle \mathscr{D}=\{x_1,x_2,\ldots\}$ is dense in $[0,1]$.
		\item There exists a permutation $\displaystyle \sigma\mathscr{D}=\{y_1,y_2, \ldots \}$ of the countable set $\displaystyle \mathscr{D}$ such that the sequence $\{y_n=\sigma(x_n):n\geq1\}$ is equidistributed in $[0,1]$.
	\end{enumerate}
\end{lem} 

\section{Results for Pisot and Salem Numbers }\label{S2323}
A distinguished collection of algebraic numbers is defined below. The recent papers, \cite{SR1944}, \cite{BD1978} et alii, introduce some of the properties of these numbers. 
\begin{dfn}\label{dfn2323.200}{\normalfont
		Let $\theta\ne0$ be a root of an irreducible polynomial $f(z)\in \Z[z]$ of degree $\deg f=d\geq2$. Then,
		\begin{enumerate}
			\item The real number $\theta>1$ is called a \textit{Pisot number} if the conjugates roots have absolute value $|\theta_i|<1$ for $i=1,2,3,\ldots,d-1$.\\
			\item The real number $\theta>1$ is called a \textit{Salem number} if the conjugates roots have absolute value $|\theta_i|\leq1$, and at least one has $|\theta_j|=1$ for $i=1,2,3,\ldots,d-1$.\\
		\end{enumerate}
	}
\end{dfn}

\begin{lem}\label{lem2323.300}The sequence powers $\{\theta^n:n\geq1\}$ of a Pisot number is approximates a subsequence of integers exponentially fast.
\end{lem} 
\begin{proof}[\textbf{Proof}]By the Newton identity, a sum of conjugate algebraic integers
	\begin{equation}\label{eq2323.310}
		\theta^n+\theta_1^n+\theta_2^n+\cdots+\theta_{d-1}^n
	\end{equation}	
	is an integer for any integer $n\geq1$. By the definition of a Pisot number $|\theta_i|<1$, and $|\theta|>1$. Therefore, the rearrange sum
	\begin{equation}\label{eq2323.320}
		\theta^n=-\theta_1^n-\theta_2^n-\cdots-\theta_{d-1}^n
	\end{equation}
	converges to an integer at exponential rate	as $n\to \infty$. 
\end{proof}  
\begin{exa}\label{exa2323.210} {\normalfont The first Pisot number seems to be the real root
		$$\theta=\sqrt[3]{((9+\sqrt{69})/18)}+\sqrt[3]{((9-\sqrt{69})/18)}= 1.324717957244746\ldots
		$$ of the polynomial $x^3-x-1$. The sum of the remaining conjugate roots is		$$\theta_1+\theta_2=2r\cos 2\pi\omega,$$where $r<1$ and $\omega \in (0,1)$, see Figure \ref{fig2323.110} below.\\
		
		The corresponding sequence 
		$$x_n=\theta^n+\theta_1^n+\theta_2^n$$ is exponentially close to a subsequence of integers as $n\to \infty$.
		
	}
	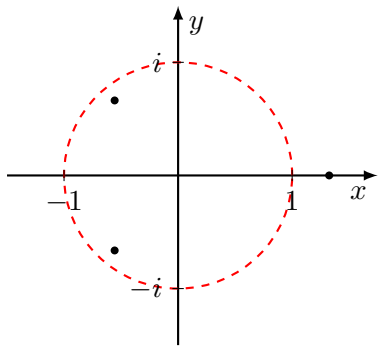
\begin{figure}[H]\label{fig2323.110}
		\begin{center}
			\begin{tikzpicture}[scale=1.5]
				\draw[thick,-latex] (-1.5,0) -- (1.75,0) node[below left] {$x$};
				\draw[thick,-latex] (0,-1.5) -- (0,1.5) node[below right] {$y$};
				\draw[red,thick,dashed] (0,0) circle (1cm);
				\draw (-1,0.05) -- (-1,-0.05) node[below]{$-1$};
				\draw (1,0.05) -- (1,-0.05) node[below]{$1$};
				\draw (0.05,1) -- (-0.05,1) node[left]{$i$};
				\draw (0.05,-1) -- (-0.05,-1) node[left]{$-i$};
				\foreach \X/\Y in {-130/0.8656,130/0.8656,0/1.3247}
				{\fill (\X:\Y) circle[radius=1pt];}
			\end{tikzpicture}
		\end{center}
		\caption{The roots of $x^3-x-1$}
	\end{figure}
\end{exa}

\begin{exa}\label{exa2323.250} {\normalfont The first Salem number seems to be the real root
		$$\theta=1.266361236713076
		\ldots
		$$ of the Lehmer polynomial $x^{10}+x^9-x^7-x^6-x^5-x^4-x^3+x+1$. The sum of the conjugate roots is
		$$\theta+\theta_1+\theta_2+\cdots+\theta_8+\theta_9=\theta+\theta_1+2r_1\cos 2\pi \omega_1+\cdots+2r_4\cos 2 \pi\omega_4,$$where $\theta_1=\theta^{-1}<1$, $|\theta_i|=r_i\leq1$ and $\omega_i \in (0,1)$ for $i\in \{2,3,\ldots,9\}$. The first pair of conjugate roots $\theta_2+\theta_3=2r_1\cos \omega_1$, and so on.\\
		
		The corresponding sequence 
		\begin{eqnarray}\label{???}
			x_n&=&\theta^n+\theta_1^n+\theta_2^n+\cdots +\theta_8^n+\theta_9^n\nonumber\\
			&=&\theta^n+\theta_1^n+r_1^n\cos \omega_1n+\cdots+r_4^n\cos \omega_4n,\nonumber
		\end{eqnarray}
		where at least one $r_i=1$, is exponentially close to a subsequence of integers as $n\to \infty$.
	}
\end{exa}

\begin{thm}\label{thm2323.700} If $\theta>1$ is a Salem number of degree $d=2m\geq2$, then sequence powers
	\begin{eqnarray}\label{eq2323.700}
		\theta^n+\theta^{-n}&=&-\theta_2^n-\cdots -\theta_{d-2}^n-\theta_{d-1}^n\nonumber\\
		&=&-2\cos 2\pi\omega_1n-\cdots-2\cos 2\pi\omega_{m-1}n,\nonumber
	\end{eqnarray}	
	where $\omega_i\in(0,1)$, is dense, but not equidistributed on the unit interval $(0,1)$.
\end{thm}
\begin{proof}[\textbf{Proof}] Under this condition all the complex roots $\theta_2,\theta_3,\ldots,\theta_{2d-1}$ are on the unit circle, and the two real roots are $\theta>1$ and $\theta_1=\theta^{-1}<1$. A proof that the sequence is not equidistributed is derived from the relation
	\begin{eqnarray}\label{eq2323.710}
		e^{i2\pi k\left (\theta^n+\theta^{-n}\right)}&=&e^{-i2\pi k \left (\theta_2^n+\cdots +\theta_{d-2}^n+\theta_{d-1}^n\right) }\nonumber\\
		&=&e^{-i2\pi k \left (2\cos 2\pi\omega_1n+\cdots+2\cos 2\pi\omega_{m-1}n\right)} ,\nonumber\\
		&=&\prod_{1\leq s\leq d-1}e^{-i4\pi k \cos 2\pi\omega_sn} \nonumber,
	\end{eqnarray}	
	and the Bessel function identity
	\begin{equation}\label{eq2323.720}
		J_0(-4\pi k)=\int_0^1 e^{-i4\pi k \cos 2\pi\omega(t)}dt.
	\end{equation}
	A complete and short proof is given in \cite{PS1964}.
\end{proof}

Recent refinements of this results are available in the literature, see \cite{AT2004}, \cite{DR2008}.

\section{Equidistribution Criteria}\label{S2266}
The standard Weyl criterion claims that \eqref{eq2266.500} is true for all integer parameter $k\ne0$, see \cite[Theorem 2.1]{KN1974}. This result is extended to all rational parameter $k=r\ne0$.
\begin{thm}\label{thm2266.500} {\normalfont (Extended Weyl Criterion)} The sequence of real number $\{u_n:n\geq 1\}$  is uniformly distributed modulo $1$ if and only if 
	\begin{equation}\label{eq2266.500} 
		\lim_{x\to \infty} \frac{1}{x}\sum_{n\leq x}e^{i 2 \pi  ku_ n}=0
	\end{equation}
	for any fixed rational number $k\ne0$.
\end{thm}

\begin{proof}Without loss in generality assume $\{u_n=\alpha n :n\geq 1\}$, where $\alpha\ne0$ is an irrational number. Summing the exponential sum yields
	\begin{eqnarray}\label{eq2266.550} 
		\left | \sum_{n\leq x}e^{i 2 \pi \alpha k n}\right |&=&\left |\frac{e^{i 2 \pi \alpha k (x+1)}-1}{e^{i 2 \pi \alpha k}-1}\right |\\
		&\leq &\frac{1}{\left |\sin( \pi \alpha k)\right |}\nonumber.
	\end{eqnarray}
	Since $\alpha\ne0$ is an irrational number, the product representation of the sine function shows that
	\begin{equation}\label{eq2266.560} 
		\sin(\pi \alpha k)=\pi \alpha k \prod_{n\geq 1}\left ( 1-\frac{k^2 \alpha^2}{n^2}\right)\ne 0
	\end{equation}
	for any fixed rational number $k\ne0$, the last inequality \eqref{eq2266.550} is bounded by a constant.
\end{proof}
\begin{thm} \label{thm2266.200} {\normalfont (\cite{KJ1935})} For any real number $\alpha\ne0$, and almost all irrational $\theta \in \R^{\times}$, the sequence $\displaystyle \{\alpha\theta^n:n\geq1\}$ is equidistributed modulo $1$. 
\end{thm}
The best known exceptions to this theorem are documented in Theorem \ref{thm2323.700}.
\section{Equidistribution Criteria for Normal Numbers }\label{S2270}
\begin{thm} \label{thm2270.150}{\normalfont (Wall)} An irrational number $\alpha \in \R$ is a normal number in base $p\geq2$ if and only if the sequence $\{\alpha p^n: n \geq 1\}$ is uniformly distributed modulo $1$.
\end{thm}
The proof of this criterion appears in \cite[Theorem 8.15]{NI1956}.


\begin{thm} \label{thm2270.300} Given  real number $\alpha \in \R^{\times}$, the following statements are valid.
	\begin{enumerate}[font=\normalfont, label=(\roman*)]
		\item If $\alpha $ is irrational, then $\displaystyle r\alpha+s$ is irrational for all rational scale $r,s \in\mathbb{Q}^{\times}$.
		\item If $\alpha $ is normal in base $b$, then $\displaystyle r\alpha+s$ is normal in base $b$ for all rational scale $r,s \in\mathbb{Q}^{\times}$.
	\end{enumerate}	
\end{thm}
\begin{proof}(ii) This follows from the Wall criterion, Theorem \ref{thm2270.150}.
\end{proof}



\section{Equidistribution Criteria for Double Sequences}\label{S4400}
There are several criteria for the equidistribution of double sequences. The criterion stated in Theorem \ref{thm4400.700}, and the well known Furstenberg criterion, see Theorem \ref{thm3300.800} are used in proof of the main result. Basically, these are extensions of the Weyl criterion in Theorem \ref{thm2266.500} and or Wall criterion in Theorem \ref{thm2270.150}. \\

\begin{thm} \label{thm4400.700}{\normalfont (\cite[Theorem 2.9]{KN1974})} The double sequence of real numbers $\{\beta_{m,n}:m,n\geq1 \}$ is uniformly distributed modulo $1$ if and only if
	$$\lim_{x,y \to \infty}\frac{1}{xy}\sum_{ m\leq y,}\sum_{ n\leq x}e^{i2 \pi k\beta_{m,n}}=0$$	
	for all integers $k\ne0$.
\end{thm}

\section{Equidistribution Criteria for Multivariable Polynomials}\label{S4405}
The equidistribution criteria for the sequences of real numbers $\{x_{n_1,\ldots,n_d}=f(n_1,\ldots ,n_d)\alpha:n_i\geq1\}$ generated by multivariable polynomials $f(t_1,\ldots ,t_d)\in\Z[t_1,\ldots ,t_d]$ are similar to the criteria for polynomial of a single variable.

\begin{thm} \label{thm4405.700} Let $\alpha$ be an irrational number, and let $f(t_1,\ldots ,t_d)$ be a monotonically increasing polynomial over the integers. Then, the sequence of real numbers $\{f(t_1,\ldots ,t_d)\alpha:n_i\geq1 \}$ is uniformly distributed modulo $1$. 
\end{thm}

\begin{proof}[\textbf{Proof}] The proof for the simplest case of additive multivariable polynomial $f(t_1,t_2)=t_1^a+t_2^b$, where $a\geq1$ and $b\geq1$, is provided here. Consider the sequence of real numbers
	\begin{equation}\label{eq4405.700}
\{\beta_{m,n}=(m^a+n^b)\alpha:m,n\geq1\}.
\end{equation}	
	
An application of Theorem \ref{thm4400.700} yields
	\begin{equation}\label{eq4405.710}
		\frac{1}{x^2}\;	\sum_{ m\leq x,}	\sum_{ n\leq x}e^{i2 \pi k\beta_{m,n}}=\frac{1}{x^2}\;	\sum_{ m\leq x,}	\sum_{ n\leq x}e^{i2 \pi k \left(m^a+n^b\right )\alpha}=o(1)
	\end{equation}
	for any parameter $k\ne0$. \\
	
	Rearranging the exponential sum as a product yields the decomposition 
	\begin{eqnarray}\label{eq4405.720}
		\frac{1}{x^2}\;	\sum_{ m\leq x,}	\sum_{ n\leq x}e^{i2 \pi k \left(m^a+n^b\right )\alpha}&=&\left(\frac{1}{x}\;	\sum_{ m\leq x}	e^{i2 \pi km^a\alpha} \right)\times \left(\frac{1}{x}\;	\sum_{ n\leq x}e^{i2 \pi k n^b\alpha}\right )\nonumber\\
		&=&o(1).
	\end{eqnarray}
Clearly, these exponential sums satisfy the conditions
	\begin{equation}\label{eq4405.730}
		\frac{1}{x}\;	\sum_{ m\leq x}	e^{i2 \pi km^a\alpha} =o(1)\qquad \text{ and }\qquad \frac{1}{x}\;	\sum_{ n\leq x}e^{i2 \pi k n^b\alpha}
		=o(1).
	\end{equation}
	
	Therefore, the sequence of real numbers \eqref{eq4405.700} is equidistributed modulo $1$. The general case for any multivariable polynomial is similar.
\end{proof}
\vskip .15 in
\begin{exa}\label{exa4405.600}{\normalfont The followings sequences of real numbers are equidistributed modulo $1$.
		\begin{enumerate}
			\item For any irrational $\alpha$, the subset of real numbers $\displaystyle \mathscr{M_1}=\{v_{m,n}=(m^2+n^3)\alpha: m,n\geq0\}$. \\
			
\item For any irrational $\alpha$, the subset of real numbers $\displaystyle \mathscr{M_2}=\{v_{m,n}=(m^4n^5+m^9+n^3)\alpha: m,n\geq0\}$. \\
		\end{enumerate}
	}
\end{exa}

\section{Rational Prime Approximations}\label{S4466}
The quantity $\left | \left | x \right | \right |=\min_{n\in \Z}\{|x-n|\}\geq0 $ defines the least distance to an integer. Using this notation, the rational number approximation $\left |q\alpha -m \right | < q^{-1 }$  can be written in the form $\left | \left | q\alpha \right | \right | < q^{-1 }$, where $q\geq1$ is an integer. The  rational prime approximations is a refinedment to primes denominators $q=p$. 

\begin{thm} {\normalfont (\cite[Theorem 1]{MK2009})} \label{thm2266.600} Let $\varepsilon>0$ and $\tau = 1/3-\varepsilon$. Then, there exist infinitely many primes $p$ such
	that
	\begin{equation}\label{eq4466.610} 
		\left | \left | p\alpha \right | \right | < p^{-1/3+\varepsilon }.
	\end{equation}
\end{thm}
For applications to normal numbers, an additional property is essential.
\begin{conj} \label{conj2266.700} Let $\varepsilon>0$ and $\tau = 1/3-\varepsilon$. Then, there exist infinitely many primes $p$ such
	that
	\begin{equation}\label{eq4466.700} 
		\left | \left | p\alpha \right | \right | < p^{-1/3+\varepsilon },
	\end{equation}
	and the integer $10$ has a large multiplicative order $\ord_p10\gg p^{\delta}$, where $\delta>0$ is a small number.
	
\end{conj}
\section{Random Rational Approximations}\label{S5588}
The one-to-one correspondence between the factional parts of the sequence $x_n=n\log 10 +\log \pi$ and $y_n=\pi10^n $, where $n\geq 1$, via the map $x_n \longrightarrow y_n=e^{x_n}$ clues to the uniform distribution of both sequences. To explicate the relationship between all the sequences and fractional parts consider the sets
\begin{align}
	X&=\left\{x_n=n\log 10 +\log \pi:n\geq1 \right\},\\   
	Y&=\left\{y_n= \pi 10^n:n\geq1 \right\},\nonumber\\
	U&=\left\{\{x_n\}:n\geq1 \right\}, \nonumber\\
	V&=\left\{\{y_n\}:n\geq1\right\}, \nonumber
\end{align}
and the correspondence diagram.
\begin{center}
	\begin{tikzpicture}
		\matrix (m) [matrix of math nodes,row sep=3em,column sep=4em,minimum width=2em]
		{
			X & Y \\
			U & V\\};
		\path[-stealth]
		(m-1-1) edge node [left] {$\rho$} (m-2-1)
		(m-1-1) edge node [above] {$e^{x_n}$} (m-1-2)
		(m-2-1) edge node [below] {$e^{\{x_n\}}$} (m-2-2) 
		(m-1-2) edge node [right] {$\rho$} (m-2-2);
	\end{tikzpicture}
\end{center}
The class function $\rho :\R \longrightarrow [0,1]$, refer to the above diagram, assigns the fractional parts $\rho(x_n)=\{x_n\}$ and $\rho(y_n)=\{y_n\}$ of the corresponding sequences $x_n=n\log 10 +\log \pi$ and $y_n=e^{x_n}=\pi 10^n$, respectively, where $n\geq 1$.\\

The equidistribution of the sequence $x_n$ is established in the Lemma below.
\begin{lem}\label{lem5588.800} The sequence of real number $x_n=n\log 10 +\log \pi$, where $n\geq 1$, is uniformly distributed modulo $1$.
\end{lem}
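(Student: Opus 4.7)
The plan is to recognize $x_n = n\log 10 + \log \pi$ as a linear (arithmetic-progression) sequence in $n$ with common difference $\log 10$ and phase shift $\log \pi$, and then invoke the classical Weyl equidistribution theorem, which says that $\{n\alpha + \beta\}$ is uniformly distributed modulo $1$ whenever $\alpha \in \R \setminus \Q$. So the entire lemma reduces to verifying the single arithmetic fact that $\log 10$ is irrational.

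First I would state (or cite) Weyl's criterion in the form convenient here: a sequence $(x_n)$ is uniformly distributed mod $1$ iff for every nonzero integer $h$,
\begin{equation*}
\lim_{N \to \infty} \frac{1}{N}\sum_{n=1}^{N} e^{2\pi i h x_n} = 0.
\end{equation*}
For $x_n = n\alpha + \beta$ with $\alpha$ irrational, the sum becomes $e^{2\pi i h \beta}\sum_{n=1}^N e^{2\pi i h n \alpha}$, a geometric progression whose partial sums are bounded by $1/|\sin(\pi h \alpha)|$ because $h\alpha \notin \Z$; dividing by $N$ gives the required limit. This is a routine computation, so I would only sketch it.

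The main content is therefore showing that $\alpha = \log 10$ is irrational. I would argue by contradiction: if $\log 10 = p/q$ with $p, q \in \Z$ positive, then exponentiating yields $e^p = 10^q$, an algebraic (indeed rational integer) equation satisfied by $e$. This contradicts the irrationality of $e$ (a fortiori its transcendence). Hence $\log 10 \notin \Q$.

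I do not expect any real obstacle here; the lemma is a textbook application of Weyl, and the only nontrivial ingredient is the irrationality of $\log 10$, which follows immediately from the irrationality of $e$. The shift by $\log \pi$ plays no role in the argument beyond a unit-modulus factor in Weyl's sum. One could equivalently cite the Weyl--Kronecker theorem directly for linear sequences and bypass the criterion altogether.
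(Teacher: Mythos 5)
Your proposal is correct and follows the same route the paper intends: the paper's proof is just the one-line remark ``a routine calculation based on the Weyl criterion,'' and you have supplied exactly the routine details (geometric-sum bound plus the irrationality of $\log 10$). One small precision: from $e^p=10^q$ you need the irrationality of $e^p$ for positive integers $p$ (equivalently, the transcendence of $e$, which you do invoke), since the irrationality of $e$ alone would not rule out $e$ being a root of $x^p-10^q$ for $p\geq 2$.
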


\begin{proof}A routine application of Theorem \ref{thm2266.500}, see also \cite[Theorem 2.1]{KN1974}. 
\end{proof}
The conditional proof of the equidistribution of the sequence $y_n=e^{x_n}=\pi10^n $ is significantly longer. Some of the required foundational results are established here.\\

The result below shows that for each integer $n\geq 1$, the fractional part $\{\pi10^n\}$ is contained in some random subinterval of the form
\begin{equation}\label{eq5588.195} 
	\left [\frac{r_n}{q_k}+\frac{1}{q_k^{\mu-1}},\frac{s_n}{q_k}+\frac{c_n}{q_k^2}\right )\subseteq [0,1],
\end{equation}

where $0\leq r_n,s_n,c_n\leq q_k$. In this application, the lower bound $\mu(\pi)\geq2$ of the irrationality measure of the real number $\pi$ is sufficient, see \cite[p. \  556]{WM2000} for the definition of this quantity.

\begin{lem} \label{lem5588.100}   If $p_k/q_k$ is the $k$th convergent of the real number $\pi$, then, for each $n\leq q_k$, the fractional part $\{\pi 10^n \}$ of the real number $\pi10^n$ satisfies one or both of the following inequalities.  
	
	\begin{enumerate} [font=\normalfont, label=(\roman*)]
		\item$ \displaystyle 
		\frac{r_n}{q_k}+\frac{10^n}{2q_k^{2}}\leq \{\pi 10^n \}\leq\frac{r_n+1}{q_k}$, \tabto{8cm} if  $ 10^n \leq q_k,$
		\item$\displaystyle
		\frac{r_n}{q_k}+\frac{1}{q_k^{\mu-1}}\leq \left \{ \pi 10^n \right \}\leq  \frac{s_n}{q_k}+\frac{c_n}{q_k^2},$ \tabto{8cm} if  $ 10^n > q_k,$
	\end{enumerate}
	where $\mu=\mu(\pi)\geq 2$ is the irrationality measure of $\pi$, and $0\leq c_n,r_n,s_n< q_k$. 
\end{lem}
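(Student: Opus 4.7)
The plan is to lift the standard Diophantine approximation of $\pi$ by its convergents to a statement about the fractional parts $\{\pi \cdot 10^n\}$. The starting identity is obtained by writing
$$\pi \cdot 10^n = \frac{p_k \cdot 10^n}{q_k} + 10^n\left(\pi - \frac{p_k}{q_k}\right),$$
performing the Euclidean division $p_k \cdot 10^n = A_n q_k + r_n$ with $A_n \in \Z$ and $0 \leq r_n < q_k$, and isolating the error term $E_n := 10^n(\pi - p_k/q_k)$, so that
$$\pi \cdot 10^n = A_n + \frac{r_n}{q_k} + E_n.$$
The two cases of the lemma then correspond to the two regimes in which $E_n$ is small compared with $1/q_k$ versus large.

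For case (i), where $10^n \leq q_k$, I would use the convergent bound $|\pi - p_k/q_k| < 1/(q_k q_{k+1}) \leq 1/q_k^2$ to deduce $|E_n| \leq 10^n/q_k^2 \leq 1/q_k$. After fixing the sign of $\pi - p_k/q_k$ by an appropriate choice of the parity of $k$ (or by passing to an adjacent convergent), this places $\pi \cdot 10^n$ within distance $1/q_k$ of the rational $A_n + r_n/q_k$ from above, yielding the upper estimate $\{\pi \cdot 10^n\} \leq (r_n+1)/q_k$. The matching lower bound $r_n/q_k + 10^n/(2q_k^2)$ would come from the companion convergent inequality $|\pi - p_k/q_k| > 1/(q_k(q_k + q_{k+1}))$, which exceeds $1/(2q_k^2)$ when $q_{k+1} \leq q_k$; one would pass to the appropriate convergent of $\pi$ to ensure this holds, invoking $\mu(\pi) \geq 2$ to certify that the two-sided approximation is tight enough.

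For case (ii), where $10^n > q_k$, the error $E_n$ may exceed $1$, so I would write $E_n = B_n + e_n$ with $B_n \in \Z$ and $0 \leq e_n < 1$, so that $\{\pi \cdot 10^n\} = \{r_n/q_k + e_n\}$. The upper estimate $s_n/q_k + c_n/q_k^2$ arises from $e_n \leq |E_n| \leq 10^n/q_k^2$, which after a further reduction modulo $1/q_k$ leaves a rational head $s_n/q_k$ (with $0 \leq s_n < q_k$, absorbing $r_n/q_k$) and a residue of size at most $c_n/q_k^2$ with $0 \leq c_n < q_k$. The lower estimate $1/q_k^{\mu-1}$ is precisely the irrationality-measure bound: $|\pi - p_k/q_k| \geq q_k^{-\mu}$ (with constants absorbed for $q_k$ sufficiently large) gives $|E_n| \geq 10^n q_k^{-\mu} \geq q_k \cdot q_k^{-\mu} = q_k^{1-\mu}$, and after reduction this contributes at least $1/q_k^{\mu-1}$ above the nearest multiple of $1/q_k$.

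The main obstacle I anticipate is the careful bookkeeping of integer parts when $r_n/q_k + E_n$ crosses a multiple of $1$, together with the need to fix the sign of $\pi - p_k/q_k$ consistently across the two cases. In particular, making the sharp form of the lower bound in (i) stick requires selecting the convergent so that the partial quotient $a_{k+1}$ is not too large, or equivalently replacing $p_k/q_k$ by an adjacent convergent when the one-sided approximation over-shoots. Once the signs, parities, and modular reductions are aligned, the stated inequalities fall out directly from the decomposition $\pi \cdot 10^n = A_n + r_n/q_k + E_n$ combined with the two-sided Diophantine bounds on $|\pi - p_k/q_k|$.
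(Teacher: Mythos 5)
Your proposal follows essentially the same route as the paper: multiply the two-sided convergent inequality for $\pi - p_k/q_k$ by $10^n$, perform the Euclidean division $p_k 10^n = A_n q_k + r_n$ (and, in case (ii), the analogous division of $(p_kq_k+1)10^n$ by $q_k^2$), and read off the fractional part, with the irrationality measure supplying the lower bound when $10^n > q_k$. The obstacles you flag --- fixing the sign of $\pi - p_k/q_k$, wraparound of the fractional part, and the fact that the lower bound $1/(q_k(q_k+q_{k+1})) \geq 1/(2q_k^2)$ would require $q_{k+1} \leq q_k$ --- are genuine, and the paper's own proof does not resolve them either: it simply asserts $1/(2q_k^2) \leq \pi - p_k/q_k$ for even $k$ by citation.
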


\begin{proof} The verification is split into two cases, depending on the magnitude of the integer $10^n$.\\
	
	\textbf{Case I}. Observe that for all sufficiently large $q_k$, and any integer $10^n\leq q_k$, there is the rational approximation inequality 
	\begin{equation}\label{eq5588.110} 
		\frac{1}{2q_k^{2}}\leq \pi - \frac{p_k}{q_k}   \leq \frac{1}{q_k^{2}}\leq  \frac{1}{10^nq_k},
	\end{equation}
	for any even index $k\geq 1$, see \cite[Theorem 163]{HW1975}. This leads to the new inequality
	\begin{equation}\label{eq5588.120} 
		\frac{10^n}{2q_k^{2}}\leq \pi10^n - \frac{10^np_k}{q_k}   \leq  \frac{1}{q_k}.
	\end{equation}
	Now, replace $10^np_k=a_nq_k+r_n$ to get the equivalent expression
	\begin{equation}\label{eq5588.130} 
		\frac{10^n}{2q_k^{2}}\leq  \pi10^n - a_n-\frac{r_n}{q_k}  \leq  \frac{1}{q_k}.
	\end{equation}
	Clearly, this implies that the fraction part satisfies 
	\begin{equation}\label{eq5588.140} 
		\frac{r_n}{q_k}+\frac{10^n}{2q_k^{2}}\leq \left \{ \pi10^n \right \}\leq  \frac{r_n+1}{q_k},
	\end{equation}
	.\\
	
	\textbf{Case II}. For all sufficiently large $q_k$, and any integer $10^n> q_k$, there is the rational approximation inequality 
	\begin{equation}\label{eq5588.150} 
		\frac{1}{q_k^{\mu}} \leq \pi - \frac{p_k}{q_k}   \leq \frac{1}{q_k^{2}},
	\end{equation}
	for any even index $k\geq 1$, see \cite[Theorem 163]{HW1975}. Here, the quantity $\mu=\mu(\pi)\geq2$ is the irrationality measure of $\pi$, see \cite[p. 556]{WM2000}. This leads to the new inequality 
	\begin{equation}\label{eq5588.160} 
		\frac{1}{10^nq_k^{\mu-1}}\leq \pi - \frac{p_k}{q_k}   \leq \frac{1}{q_k^{2}}.
	\end{equation}
	Equivalently, this is 
	\begin{equation}\label{eq5588.170} 
		\frac{1}{q_k^{\mu-1}}\leq \pi10^n - \frac{10^np_k}{q_k}   \leq  \frac{10^n}{q_k^2}.
	\end{equation}
	Now, rearrange it as
	\begin{equation}\label{eq5588.175} 
		\frac{p_k10^n}{q_k}+\frac{1}{q_k^{\mu-1}}\leq \pi10^n  \leq  \frac{(p_kq_k+1)10^n}{q_k^2},
	\end{equation}
	and replace $p_k10^n=a_nq_k+r_n$ and $(p_kq_k+1)10^n=b_nq_k^2+s_nq_k+c_n$ to get the equivalent expression
	\begin{equation}\label{eq5588.180} 
		a_n+\frac{r_n}{q_k}+\frac{1}{q_k^{\mu-1}}\leq \pi10^n  \leq b_n+ \frac{s_n}{q_k}+ \frac{c_n}{q_k^2},
	\end{equation}
	where $0\leq a_n,b_n,c_n,r_n,s_n< q_k$. This implies that the fraction part satisfies 
	\begin{equation}\label{eq5588.190} 
		\frac{r_n}{q_k}+\frac{1}{q_k^{\mu-1}}\leq \left \{ \pi10^n \right \}\leq  \frac{s_n}{q_k}+\frac{c_n}{q_k^2},
	\end{equation}
	as claimed.
\end{proof}

\begin{lem} \label{lem5588.200} Let $p_k/q_k$ be the $k$th convergent of the real number $\pi$, and let $q= q_k+o(q_k)$ be a large prime. Then, for each $n\leq q$, the fractional part $\{\pi 10^n \}$ of the real number $\pi10^n$ satisfies one or both of the following inequalities for infinitely many primes $q$.  
	\begin{enumerate} [font=\normalfont, label=(\roman*)]
		\item
		$ \displaystyle 
		\frac{r_n}{2q}+O\left(\frac{1}{q^{2}}\right)\leq \left \{ \pi10^n \right \}\leq  \frac{r_n+1}{q}+O\left(\frac{1}{q^{2}}\right)$, \tabto{10cm} if  $ 10^n \leq q_k,$
		\item
		$\displaystyle
		\frac{r_n}{2q}+\frac{1}{(2q)^{\mu-1}}+O\left(\frac{1}{q^{2}}\right)\leq \left \{ \pi10^n \right \}\leq  \frac{s_n}{q}+O\left(\frac{1}{q^{2}}\right)$, \tabto{10
			cm} if  $ 10^n > q_k,$
	\end{enumerate}
	where $\mu=\mu(\pi)\geq 2$ is the irrationality measure of $\pi$, and $0\leq c_n,r_n,s_n< q$. 
\end{lem}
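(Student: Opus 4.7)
My plan is to deduce this lemma from Lemma \ref{lem5588.100} by replacing the convergent denominator $q_k$ with a nearby prime $q = q_k + o(q_k)$. The motivation is that later arguments (invoking the Artin primitive root conjecture) require a prime modulus, so this lemma serves as the bridge between the convergent estimates and the prime setting.

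I would first establish that primes of the form $q = q_k + o(q_k)$ exist in abundance as $k$ varies. By the prime number theorem, for any fixed $\epsilon > 0$ the interval $[q_k, q_k(1 + \epsilon)]$ contains $\gg q_k/\log q_k$ primes once $q_k$ is sufficiently large; letting $\epsilon \to 0$ slowly, or invoking sharper short-interval results such as Baker--Harman--Pintz, I would obtain infinitely many primes $q$ with $q - q_k = o(q_k)$ as $k$ ranges over the positive integers. Given such a prime $q$, I would apply Lemma \ref{lem5588.100} to obtain bounds on $\{\pi 10^n\}$ in terms of $q_k$.

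The next step is the translation of these bounds from denominator $q_k$ to denominator $q$. Since $q = q_k(1 + o(1))$, for all sufficiently large $k$ we have $q/2 \leq q_k \leq 2q$, and consequently $1/q_k^{\alpha} \geq 1/(2q)^{\alpha}$ for every $\alpha \geq 1$. Substituting these coarse comparisons into the lower bounds of Lemma \ref{lem5588.100} immediately yields the claimed lower bounds $r_n/(2q)$ and $r_n/(2q) + 1/(2q)^{\mu-1}$, with the subordinate terms $10^n/(2q_k^2)$ and $c_n/q_k^2$ absorbed into the $O(1/q^2)$ error. For the upper bounds, I would use the identity $1/q_k - 1/q = (q - q_k)/(qq_k)$ together with the restriction $r_n, s_n < q$ to rewrite $(r_n+1)/q_k$ and $s_n/q_k + c_n/q_k^2$ in terms of $q$, absorbing the cross term into the $O(1/q^2)$ error.

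The hardest step will be calibrating the $O(1/q^2)$ error in the upper bounds. A naive substitution leaves a residual of order $o(1/q)$, and sharpening to $O(1/q^2)$ requires $q - q_k$ to be genuinely small --- smaller than what $q = q_k + o(q_k)$ alone immediately guarantees. To remedy this, I would either refine the choice of prime $q$ by appealing to a sharper short-interval prime-gap estimate, or else relax the claimed error term to whatever the selected prime permits. Once this calibration is settled, the remainder of the argument is a routine substitution into the inequalities of Lemma \ref{lem5588.100}.
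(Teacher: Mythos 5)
Your overall route---choose a prime $q$ close to $q_k$, then substitute a comparison between $1/q_k$ and $1/q$ into the two inequalities of Lemma \ref{lem5588.100}---is exactly the paper's: its entire proof consists of asserting the two-sided bound \eqref{eq5588.210}, namely $\tfrac{1}{2q}+O(1/q^2)\le \tfrac{1}{q_k}\le \tfrac{1}{q}+O(1/q^2)$, and substituting it into \eqref{eq5588.140} and \eqref{eq5588.190}. Your handling of the lower bounds (via $q_k\le 2q$, hence $1/q_k^{\alpha}\ge 1/(2q)^{\alpha}$) and of the existence of primes in $[q_k,\,q_k+o(q_k)]$ is sound and is, if anything, more explicit than what the paper writes.

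However, the difficulty you flag in your final paragraph is a genuine gap, not a calibration issue, and neither of your proposed remedies closes it. Write $q=q_k+g$ with $g\ge 0$. The claimed upper bound in (i) requires $\tfrac{r_n+1}{q_k}\le \tfrac{r_n+1}{q}+E$ with $E\ll 1/q^2$, and the exact discrepancy is $(r_n+1)\,g/(q q_k)$. Since $r_n$ is the residue of $10^n p_k$ modulo $q_k$ and is generically of size $\asymp q_k$, forcing this discrepancy below $1/q^2$ requires $g\ll 1/q$, i.e.\ $q=q_k$ exactly---which holds only when $q_k$ itself is prime, something not known to occur for infinitely many convergents of $\pi$. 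No short-interval prime theorem can help: even Cram\'er's conjecture gives only $g\ll(\log q_k)^2$, leaving a residual of order $(\log q)^2/q$ rather than $1/q^2$; Baker--Harman--Pintz gives $g\ll q_k^{0.525}$ and a residual of order $q^{-0.475}$. Your alternative of relaxing the error term to the attainable $O(g/q)$ changes the lemma in a way that matters downstream, because Lemma \ref{lem7711.800} and the proof of Theorem \ref{thm2200.400} consume precisely the $O(1/q^2)$ quality of the approximation $|\{\pi 10^n\}-s_n/q|\ll 1/q^2$. For what it is worth, the paper's own proof has the identical defect: inequality \eqref{eq5588.210} is asserted without justification, already presupposes $g=O(1)$, and even if granted it is then multiplied through by $r_n+1\le q$, which inflates the error to $O(1/q)$. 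So the step you correctly isolated as the hardest is in fact the point where both arguments break down.
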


\begin{proof} For a large integer $q_k\geq 2$, and a large prime $q= q_k+o(q_k)$, the inequalities
	\begin{equation}\label{eq5588.210}
		\frac{1}{2q}+O\left(\frac{1}{q^{2}}\right)\leq \frac{1}{ q_k}\leq \frac{1}{q}+O\left(\frac{1}{q^{2}}\right)
	\end{equation}
	are valid.\\
	
	\textbf{Case I}. Replacing \eqref{eq5588.210} into \eqref{eq5588.140} yields
	\begin{equation}\label{eq5588.220} 
		\frac{r_n}{2q}+O\left(\frac{1}{q^{2}}\right)\leq \left \{ \pi10^n \right \}\leq  \frac{r_n+1}{q}+O\left(\frac{1}{q^{2}}\right)
	\end{equation}
	for any even index $k\geq 1$, and all sufficiently large $q= q_k+o(q_k)$. \\
	
	\textbf{Case II}. Replacing \eqref{eq5588.210} into \eqref{eq5588.190} yields 
	\begin{equation}\label{eq5588.230} 
		\frac{r_n}{2q}+\frac{1}{(2q)^{\mu-1}}+O\left(\frac{1}{q^{2}}\right)\leq \left \{ \pi10^n \right \}\leq  \frac{s_n}{q}+O\left(\frac{1}{q^{2}}\right),
	\end{equation}
	for any even index $k\geq 1$, see \cite[Theorem 163]{HW1975}, and the irrationality measure $\mu=\mu(\pi)\geq2$ of $\pi$, 
	see \cite[p. \ 556]{WM2000} for additional information.
\end{proof}
\section{Multiplicative Subgroups And Exponential Sums}\label{S7711}

The cardinality of a subset of integers $H\subset \mathbb{Z}$ is denoted by $\#H\geq0$. The multiplicative order of an element $r\ne0$ in a finite ring $\left(\mathbb{Z}/m\mathbb{Z}\right)^{\times}$ is defined by $\ord_m r=\min \{ n\geq 1:r^n\equiv 1\bmod m\}$, where $m\geq1$ is an integer. The multiplicative order is a divisor $N\mid \varphi(m)$ of the totient function $\varphi(m)=\prod_{p\mid m}(1-1/p)$, see \cite[Theorem 2.4]{AP1976}.

\begin{lem}\label{lem7711.060} Let $\{p_k/q_k: k\geq 1\}$ be the sequence of convergents of the real number $\pi$. Assume that $\gcd(10,q_k)=1$. Then,
	\begin{enumerate}[font=\normalfont, label=(\roman*)]
		\item $\displaystyle G=\{p_k 10^n \equiv r_n\bmod q_k: n\geq 1\}$ is a large multiplicative subgroup of the finite ring 
		$\left(\mathbb{Z}/q_k\mathbb{Z}\right)^{\times}$.
		\item $\displaystyle H=\{\left(p_kq_k+1\right)10^n \equiv s_n\bmod q_k: n \geq1\}$ is a large multiplicative subgroup of the finite ring 
		$\left(\mathbb{Z}/q_k\mathbb{Z}\right)^{\times}$.
	\end{enumerate}
\end{lem}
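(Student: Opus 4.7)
The plan is to reduce both sets to the cyclic group $\langle 10 \rangle \le (\Z/q_k\Z)^\times$ generated by the residue of $10$, and then to invoke the Artin primitive root conjecture to make that group (and hence $G$ and $H$) as large as possible. Since $p_k/q_k$ is a convergent of $\pi$ one automatically has $\gcd(p_k,q_k)=1$, and by hypothesis $\gcd(10,q_k)=1$, so every product $p_k10^n$ and $(p_kq_k+1)10^n$ reduces to a unit of $\Z/q_k\Z$.

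For part (ii) the decisive observation is that $p_kq_k+1\equiv 1\pmod{q_k}$, so $s_n\equiv 10^n\pmod{q_k}$. Hence $H$ is literally the cyclic subgroup $\langle 10\rangle$, of order $\ord_{q_k} 10$, a divisor of $\varphi(q_k)$. No further work is required to confirm that $H$ is a subgroup; only its size remains at issue. For part (i), the set $G=p_k\langle 10\rangle$ is a priori only a \emph{coset} of $\langle 10\rangle$, and it forms a subgroup exactly when $p_k\in\langle 10\rangle$. The natural way to secure this is to restrict attention to those indices $k$ for which $q_k$ is prime (or, via Lemma \ref{lem5588.200}, a nearby prime $q=q_k+o(q_k)$) and for which $10$ is a primitive root modulo that prime; in that regime $\langle 10\rangle=(\Z/q_k\Z)^\times$, the containment $p_k\in\langle 10\rangle$ is automatic, and both $G$ and $H$ coincide with the full unit group of order $q_k-1$.

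To produce an infinite supply of such indices one invokes the Artin primitive root conjecture in the quantitative form asserting that $10$ is a primitive root modulo a positive density of primes. Combining this with Lemma \ref{lem5588.200}, which permits the passage from $q_k$ to a nearby prime $q=q_k+o(q_k)$, delivers an infinite subsequence along which $\langle 10\rangle=(\Z/q\Z)^\times$, so that both $G$ and $H$ equal the whole multiplicative group, hence are ``large'' in the strongest possible sense. The main technical obstacle is precisely the coset-versus-subgroup issue in part (i): without Artin there is no mechanism to place $p_k$ in $\langle 10\rangle$, and $G$ could remain a proper coset of a potentially small subgroup. The Artin hypothesis is exactly the ingredient that upgrades the coset to the full group and simultaneously makes the subgroup in (ii) maximal.
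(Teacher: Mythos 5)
Your treatment of part (ii) is essentially the paper's own argument: the paper shows that multiplication by the unit $p_kq_k+1$ permutes the residues and concludes $H=\mathcal{H}=\langle 10\rangle$, which is exactly your (cleaner) observation that $p_kq_k+1\equiv 1\pmod{q_k}$ forces $s_n\equiv 10^n$. Part (i), however, is where the problem lies, and your proposal does not prove it under the stated hypotheses — you say as much yourself. The lemma assumes only $\gcd(10,q_k)=1$ for the actual denominators $q_k$ of the convergents of $\pi$; it does not assume the Artin conjecture and does not assume $q_k$ is prime. Those hypotheses belong to the next result, Lemma \ref{lem7711.160}, where the modulus is also replaced by a nearby prime $q=q_k+o(q_k)$. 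Invoking Artin and switching to such a prime therefore proves a different statement. What you have correctly isolated is that $G=p_k\langle 10\rangle$ is a priori only a coset of $\langle 10\rangle$, and is a subgroup precisely when $p_k\in\langle 10\rangle$; since the paper's own proof addresses only (ii) and is entirely silent on (i), you have exposed a genuine defect in the lemma as stated rather than filled a gap.

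Two further remarks. First, neither you nor the paper establishes ``large'' at this stage: the only unconditional bound is $\#\mathcal{H}=\ord_{q_k}10>\log q_k/\log 10$, and the paper concedes immediately after the proof that obtaining $\#\mathcal{H}>q_k^{\varepsilon}$ requires considerably more work; your appeal to Artin to make the group maximal is the content of Lemma \ref{lem7711.160}, not of this one. Second, the honest repair of (i) without Artin is to restate it: $G$ is a coset of the subgroup $\mathcal{H}=\langle 10\rangle$, of the same cardinality $\ord_{q_k}10$. For the intended application this costs nothing, since in Theorem \ref{thm7711.460} the sum over the coset $p_k\mathcal{H}$ with frequency $a$ equals the sum over $\mathcal{H}$ with frequency $ap_k$, which is still controlled by the maximum over $\gcd(a,p)=1$. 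So the right fix is to weaken the statement of (i), not to import the Artin hypothesis into it.
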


\begin{proof} (ii) The hypothesis $\gcd(10,q_k)=1$ implies that the integer $10$ generates a multiplicative subgroup
	\begin{equation}\label{eq7711.110}
		\mathcal{H}=\{10^0,10^1,10^2,\ldots,10^n,\ldots\}\subseteq \left(\mathbb{Z}/q_k\mathbb{Z}\right)^{\times}    
	\end{equation}
	of cardinality $N=\#\mathcal{H}$. Moreover, since $\gcd(p_kq_k+1,q_k)=1$, the map 
	\begin{equation}\label{eq7711.115}
		n\longrightarrow \left(p_kq_k+1\right)10^n \equiv s_n\bmod q_k 
	\end{equation}
	is $1-$to$-1$ on the finite ring $\left(\mathbb{Z}/q_k\mathbb{Z}\right)^{\times}$, it permutes the finite ring $\mathbb{Z}/q_k\mathbb{Z}$. Therefore, the two subsets are equal, that is, $H=\mathcal{H}$.
\end{proof}

Trivially, $N=\#\mathcal{H}>\log q_k$, but it requires considerable more works to show that 
$N> q_k^{\varepsilon}$, where $\varepsilon>0$ is a small number. The ideal case has a large prime $q=q_k\geq2$, and the integer $10$ generates the maximal multiplicative group $\mathbb{F}_q^{\times}$ of cardinality $N= \varphi(q)$.

\begin{lem}\label{lem7711.160}  Assume the Artin primitive root conjecture is valid. Let $\{p_k/q_k: 
	k\geq 1\}$ be the sequence of convergents of the real number $\pi$, and let $q\sim q_k$ be a large prime. Then,
	\begin{enumerate}[font=\normalfont, label=(\roman*)]
		\item $\displaystyle \mathcal{G}=\{p_k 10^n \equiv r_n\bmod q:n\geq0\}$ is the multiplicative group of the finite field
		$\mathbb{F}_q^{\times}$,
		\item $\displaystyle \mathcal{H}=\{\left(p_kq_k+1\right)10^n \equiv s_n\bmod q:n\geq 0\}$ is the multiplicative group of the finite field
		$\mathbb{F}_q^{\times}$,
	\end{enumerate}
	for infinitely many large primes $q= q_k+o(q_k)$ such that $\gcd(p_kq_k+1,q)=1$, as $q_k \to \infty$.
\end{lem}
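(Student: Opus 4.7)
The plan is to deduce both claims as almost formal consequences of Artin's conjecture once we rephrase each set as a translate of the cyclic group generated by $10$ in $\mathbb{F}_q^{\times}$. The core observation is that if $10$ is a primitive root modulo a prime $q$, then $\{10^n \bmod q : n \geq 0\} = \mathbb{F}_q^{\times}$, and left-multiplication by any nonzero residue class is a bijection of $\mathbb{F}_q^{\times}$.

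First, I would fix a convergent $p_k/q_k$ of $\pi$ and invoke Artin's primitive root conjecture for the base $10$ to extract an infinite set $\mathcal{P}_{10}$ of primes $q$ such that $\ord_q 10 = q-1$. Along this set, the subgroup $\mathcal{H}_0 = \{10^n \bmod q : n \geq 0\}$ of $\mathbb{F}_q^{\times}$ is already the full group. Next I would combine this with a short-interval statement: the Artin primes have positive relative density among all primes (under Hooley's conditional proof), so by a standard counting argument one can select, for infinitely many indices $k$, a prime $q \in \mathcal{P}_{10}$ with $q = q_k + o(q_k)$ and $\gcd(p_k q_k + 1, q) = 1$ (the coprimality constraint eliminates at most one prime per index). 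Note that because $q \sim q_k \to \infty$ and the convergents of $\pi$ grow exponentially in $k$, one has considerable room in choosing $q$.

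For part (i), with such $q$ the integer $p_k$ is coprime to $q$ (since $q > p_k$ eventually, or at worst $q \nmid p_k$ by picking $q$ large enough), so $p_k \in \mathbb{F}_q^{\times}$. Then
\[
\mathcal{G} = \{p_k \cdot 10^n \bmod q : n \geq 0\} = p_k \cdot \mathcal{H}_0 = p_k \cdot \mathbb{F}_q^{\times} = \mathbb{F}_q^{\times},
\]
which is exactly the claim. For part (ii), the hypothesis $\gcd(p_k q_k + 1, q) = 1$ places $p_k q_k + 1$ in $\mathbb{F}_q^{\times}$, and the identical translation argument gives $\mathcal{H} = (p_k q_k + 1) \cdot \mathbb{F}_q^{\times} = \mathbb{F}_q^{\times}$.

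The main obstacle is the short-interval step: Artin's conjecture in its bare form only asserts infinitely many primes $q$ with $10$ as a primitive root, not that such primes occur in windows of relative width $o(1)$ around a prescribed sequence $q_k$. To upgrade the density version to a near-$q_k$ existence claim, I would rely on the fact that, under Hooley's conditional framework, Artin primes have a positive proportion in every interval $[X, 2X]$ for $X$ large; since the relation $q = q_k + o(q_k)$ can be realized by intervals of the form $[q_k, q_k(1+\eta_k)]$ with $\eta_k$ tending to $0$ sufficiently slowly, choosing $\eta_k$ compatible with the known interval distribution of Artin primes yields the required prime for each of an infinite subsequence of indices $k$. The rest of the argument is then the bijective translation already indicated.
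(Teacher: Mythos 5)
Your proposal takes essentially the same route as the paper: invoke Hooley's conditional form of Artin's conjecture to get a positive density of primes $q$ with $\ord_q 10 = q-1$, localize such a prime near $q_k$, and then observe that $\mathcal{G}$ and $\mathcal{H}$ are translates of $\langle 10\rangle = \mathbb{F}_q^{\times}$ by the units $p_k$ and $p_kq_k+1$. You are in fact more candid than the paper about the one real difficulty — that Artin primes must be found in intervals of relative width $o(1)$ around $q_k$, which neither your sketch nor the paper's appeal to a count in $[q_k, q_k+O(q_k/\log q_k)]$ actually establishes from Hooley's theorem.
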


\begin{proof} (ii) The conditional proof of the Artin primitive root conjecture, states that $10$ generates the multiplicative group of the finite field $\mathbb{F}_q^{\times}$ of a subset of primes $\mathcal{Q}=\{\mbox{prime } q\geq2: \ord_q 10=q-1\}$ of density $\delta(\mathcal{Q})=0.3739558\ldots$, see \cite[p.\ 220]{HC1967}. In particular, the interval $[q_k,q_k+O(q_k/\log q_k)]$ contains approximately 
	\begin{equation}\label{eq7711.380}
		\pi(q_k+O(q_k/\log q_k))-\pi(q_k)\gg q_k/(\log q_k)^2,
	\end{equation}
	large primes $q= q_k+o(q_k)$ such that $< 10>\;=\;\mathbb{F}_q^{\times}$, as $\to \infty$, confer \cite[p. \ 113]{DH1980} or similar reference. Now, proceed to use the same analysis as in the previous Lemma to complete the proof.
\end{proof}

\section{Large Multiplicative Orders Modulo $n$}\label{S0025}
The \textit{multiplicative order} of an element $u$ in a finite group $G$ of cardinality $n=\#G$ is defined by $\ord_nu=\min\{ m:u^m\equiv 1\bmod n\}$. The definition of the average multiplicative order in a fixed finite group has a very useful analytic formulation. 

\begin{dfn} \label{dfn0025.001}{\normalfont 
		Let $G$ be a cyclic group of order $n=\#G$. The average multiplicative order of the elements $u \in G$ is defined by
		\begin{equation}\label{eq0025.001}
			A(n)=\frac{1}{n}\sum_{u \in G}\ord_n u =\frac{1}{n}\sum_{d\mid  n} d \varphi(d),
		\end{equation}
		where $ \varphi(n)=n\prod_{d\mid n}(1-1/p)$.}
\end{dfn}
On average, a random element in a random finite cyclic group $G$ of cardinality $n=\#G\leq x $ has a  large order $\ord_n u \approx n$.
\begin{thm} \label{thm0025.101} The mean average order $\overline{A(n)}$ of the elements in the finite cyclic groups of cardinality $n\#G$ such that $n \leq x$ is
	\begin{equation}\label{eq0025.021}
		\overline{A(n)}=a_0x +O(\log^2 x), \nonumber
	\end{equation}
	where the constant is
	\begin{equation}\label{eq0025.023}
		a_0=\zeta(3)/2\zeta(2)=0.365381484700719249363018365653857\ldots . \nonumber
	\end{equation}
\end{thm}

\begin{proof} Taking the mean value of the average multiplicative order gives
	\begin{eqnarray}\label{eq0025.003}
		\frac{1}{x}\sum_{n\leq x}A(n)&=&\frac{1}{x}\sum_{n\leq x}\frac{1}{n}\sum_{d\mid  n} d \varphi(d) \\
		&=&\frac{1}{x}\sum_{m\leq x}\frac{1}{m}\sum_{d\leq x/m} \varphi(d) \nonumber.
	\end{eqnarray}
	The condition $d \mid n$ was used to cancel the $d$ term in  the inner sum. Substituting the asymptotic average order of the totient function $\varphi(n)$, \cite[Theorem 3.7]{AP1976}, and similar references,  leads to 
	\begin{eqnarray}\label{eq0025.005}
		\frac{1}{x}\sum_{n\leq x}A(n)&=&\frac{1}{x}\sum_{m\leq x}\frac{1}{m}\left (\frac{1}{2 \zeta(2)}\left ( \frac{x}{m}\right )^2+O\left (\frac{x}{m}\log (x/m)\right ) \right ) \\
		&=&\frac{x}{2 \zeta(2)}\sum_{m\leq x}\frac{1}{m^3}+O\left ((\log x)\sum_{m\leq x} \frac{1}{m}\right ) 
		\nonumber\\
		&=&\frac{\zeta(3)}{2 \zeta(2)}x+O\left (\log^2 x \right ) \nonumber.
	\end{eqnarray}
	This completes the proof.
\end{proof}
The double averaging accounts for the small error term. Moreover, since the mean average order $\overline{A(n)}$ is almost the same magnitude as the largest groups $\#G=n \approx x$, this result shows that the generators of the cyclic group, elements of maximal multiplicative orders, contribute the sheer bulk of the of the mean average order. A slightly more difficult proof appears in \cite[Theorem 3.1]{VS2004}.\\

Almost every element in a random finite cyclic group $G$ of cardinality $n=\#G\leq x $ has a  large order bounded below by $\ord_n u \gg n/\log n$.

\begin{thm} \label{thm0025.201} {\normalfont (\cite[Theorem 6]{KP2013})} Assume GRH. Let $ x$ be a large number, and let $G$ be finite cyclic group of cardinality $n=\#G \leq x$. Then, almost every element $u\in G$ has large multiplicative order $\ord_n u\gg n/\log x$. 
\end{thm}
\section{Estimates For Exponential Sums} \label{S7700}
\begin{thm}\label{thm7700.460} {\normalfont (\cite[Theorem 1]{BG2011})} Let $\mathcal{H}\subseteq \mathbb{F}_p$ be a multiplicative subgroup of order $\# \mathcal{H} > p^{c_0/\log \log p}$ for some sufficiently large constant $c_0 > 1$. Then
	
\begin{equation}\label{eq7700.400}
\max_{\gcd(a,p) =1} \sum_{x \in \mathcal{H}} e^{i2\pi a x/p} < e^{-(\log p)^c} \# \mathcal{H},
\end{equation}
where $c> 0$ is an absolute constant.
\end{thm}

\section{Properties of the Exponential Function} \label{S7770}

\begin{lem}\label{lem7770.800} Let $\{p_k/q_k: k\geq 1\}$ be the sequence of convergents of the irrational real number $\alpha\ne0$, and let $q= q_k+o(q_k)$ be a large integer. If the fractional part has an effective rational approximation
	
	\begin{equation}\label{eq7770.800}
		\left |\{\alpha10^n\}-\frac{s_n}{q}\right|  \ll   \frac{1}{q^2},
	\end{equation}
	where $0\leq s_n<q_k$, then
	\begin{equation}\label{eq7770.810}
		\left |e^{i2\pi \{\alpha10^n\}}-e^{i2\pi\frac{s_n}{q}}\right|  \ll   \frac{1}{q^2}.
	\end{equation}
\end{lem}

\begin{proof}[\textbf{Proof}] Basically, this follows from the Lipschitz property
	\begin{equation}\label{eq7770.820}
		\left |f(x)-f(y)\right|  \ll   \left |x-y\right| 
	\end{equation}
	of the continuous function $f(x)=e^{ix}$ of the real variable $0\leq |x|<1$. Specifically,
	
	\begin{eqnarray}\label{eq7770.830}
		D&=& \left |e^{i2\pi\{\alpha10^n\}}-
		e^{i2\pi\frac{s_n}{q}}\right|  \\
		&=&\left | e^{i2\pi\frac{s_n}{q}}\left(e^{i2\pi  \left(\{\alpha10^n\}-\frac{s_n}{q}\right)}-1\right)\right| \nonumber\\
		&=&
		\left | 
		\cos 2\pi\left(\{\alpha10^n\}-\frac{s_n}{q}\right)-1
		+i\sin 2\pi\left( \{\alpha10^n\}-\frac{s_n}{q}\right)\right|  \nonumber\\
		& \leq& \left | \cos 2\pi\left( \{\alpha10^n\}-\frac{s_n}{q}\right)-1\right|+
		\left |\sin 2\pi\left( \{\alpha10^n\}-\frac{s_n}{q}\right)\right| \nonumber\\
		& \leq& 
		\left |\sin 2\pi\left( \{\alpha10^n\}-\frac{s_n}{q}\right)\right | \nonumber\\
		& \leq& 
		\left | \{\alpha10^n\}-\frac{s_n}{q}\right | \nonumber\\
		&\ll&\frac{1}{q^2} ,\nonumber
	\end{eqnarray}
	since $\cos z=1+O(z^2)$ and $\sin z=z+O(z^3)$ for $0\leq |z|<1$.
\end{proof}

\section{Exponential Sums and Liouville Numbers}\label{S3007}
The next lemma demonstrates how the properties of irrational numbers can change the estimates of exponential sums. 

\begin{thm}\label{thm3007.400}{\normalfont (Weyl)}  Let $f(t)=a_dt^d+a_{d-1}t^{d-1}+\cdots+a_1t+a_0$ be a polynomial of  degree $\deg f=d$, and let $H$, $a$, $q\geq1$ be integers with $\gcd(a, q) = 1$. If $\alpha$ is an irrational number such that 
	\begin{equation}\label{eq3007.400}
		\left|\alpha-\frac{a}{q} \right|<\frac{c}{q^\mu}
	\end{equation} 
	for some constants $c\geq1$ and $\mu\geq2$, then for any small number $\varepsilon>0$,
	\begin{equation}\label{eq3007.405}
		\sum_{H\leq n\leq H+x}e^{i2\pi k f(n)\alpha}\ll x^{1+\varepsilon}\left( \frac{c}{q}+\frac{1}{x}+\frac{q}{x^{d}}\right)^{2^{1-d}} 
	\end{equation}
\end{thm}

The combination of the arbitrary large blocks of consecutive zeros in the $b$-adic expansion of a Liouville number, and Weyl inequality proves that certain exponential sums have trivial upper bounds. 
\begin{lem}\label{lem3007.300}Let $f(t)$ be a polynomial of degree $\deg f=d\geq1$, and let $\alpha$ be a Liouville number. Then, 
	$$	 \sum_{ n \leq x}  e^{i2\pi k f(n)\alpha}=\Omega_{\pm}(x) ,$$	
	where the implied constant depends on the irrational number $\alpha$, and the parameter $k\ne0$ as $x\to \infty$.
\end{lem}

\begin{proof}[\textbf{Proof}] Let $\{p_v/q_v:v\geq1\}$ be the sequence of convergents of the Liouville $\alpha$, and consider
	\begin{equation}\label{eq3007.310}
		\left | \alpha -\frac{p_v}{q_v}\right |> \frac{1}{q_v^{\mu+\varepsilon}}.
	\end{equation}
	Next, let $x^{d-1}\in [q_v,q_v^{\mu}]$. Replacing these in the Weyl inequality, Theorem \ref{thm3007.400}, yields
	\begin{eqnarray}\label{eq3007.330}
		\sum_{H\leq n\leq H+x}e^{i2\pi k f(n)\alpha}&\ll& x^{1+\varepsilon}\left( \frac{c}{q}+\frac{1}{x}+\frac{q}{x^{d}}\right)^{2^{1-d}} \\
		&\ll& x^{1+\varepsilon}\left( \frac{c}{x^{\frac{d}{\mu+\varepsilon}}}+\frac{1}{x}+\frac{x^{d-1}}{x^{d}}\right)^{2^{1-d}}\nonumber,	
	\end{eqnarray}
	$c\geq1$ and $k\ne0$ are constants.  
	Therefore, the hypothesis $\mu \to\infty$ implies the trivial inequality
	\begin{equation}\label{eq30073240}
		\sum_{H\leq n\leq H+x}e^{i2\pi k f(n)\alpha}\ll x^{1+\varepsilon}.
	\end{equation}
	This proves the claim.
\end{proof}

\section{Conditional Proof For The Normality Of $\sqrt{2}$} \label{S8877}

The proof of the normality of $\sqrt{2}$ in base $10$ is based on the rational prime approximations in stated in Section \ref{S4466}, the average large subgroups modulo $p$ generated by the base $10$, see Theorem \ref{thm0025.101}, (which is similar to the conditional result for rational primes approximations in Section \ref{S4466}), the Weyl criterion in Section \ref{S2266}, and the Wall criterion stated in Theorem \ref{thm2270.150}. 

\begin{proof}[\textbf{Proof}] {\bfseries (Theorem \ref{thm2200.300})} Let $x$ be a large number. Now, consider the sequence of real numbers 
	\begin{equation}\label{eq2266.200}
		\mathscr{B}(\sqrt{2})=\{ \sqrt{2}\cdot  10^n: n\geq 1\},
	\end{equation}
	and the corresponding exponential sum
	\begin{equation}\label{eq2266.205}
		\sum_{n\leq x}e^{i2 \pi r \sqrt{2}\cdot  10^n },
	\end{equation}
	where $r\ne0$ is a rational parameter. By Theorem \ref{thm2266.600}, there exists an infinite sequence of primes $\mathcal{P}= \{p\geq x:p \text{ prime}\}$ such that 
	\begin{equation}\label{eq2266.210}
		\left | \sqrt{2}-\frac{a}{p}\right |< \frac{1}{p^{4/3-\varepsilon} }
	\end{equation}
	for each $p\in\mathcal{P} $, where $1\leq a <p$. Moreover, on average, the subset of integers
	\begin{equation}\label{eq2266.220}
		\mathcal{A}=\{ 10^n\equiv m \bmod p: n\leq x\leq p\}
	\end{equation}
	is sufficiently large for each $p\in\mathcal{P} $, and has cardinality $\#\mathcal{A}\gg p^{\delta}$, where $\delta>0$ is a small number,  see Theorem \ref{thm0025.101} and Conjecture \ref{conj2266.700}. Therefore, 
	\begin{eqnarray}\label{eq2266.230}
		\sum_{n\leq x}e^{i2 \pi r \sqrt{2}\cdot  10^n }&=&
		\sum_{m\leq x}e^{i2 \pi r \left ( \frac{a}{p}+O\left (\frac{1}{p^{4/3-\varepsilon}}\right )\right ) \cdot  10^n }\\
		&=&\sum_{m\in\mathcal{A} }e^{i2 \pi r \left ( \frac{am}{p}+O\left (\frac{m}{p^{4/3-\varepsilon}}\right )\right )  }\nonumber.
	\end{eqnarray}
	By Theorem \ref{thm7700.460}, the exponential sums have nontrivial upper bounds
	\begin{equation}\label{eq7799.950}
		\sum_{n\leq x}e^{i2 \pi r \sqrt{2}\cdot  10^n }=\sum_{m\in\mathcal{A} }e^{i2 \pi r \left ( \frac{am}{p}+O\left (\frac{m}{p^{4/3-\varepsilon}}\right )\right )  }\ll p^{1-\varepsilon}.
	\end{equation}
	By the Weyl criterion, see Theorem \ref{thm2266.500}, it follows that the sequence of real numbers \eqref{eq2266.200} is uniform distributed modulo $1$. Lastly, by Theorem  \ref{thm2270.150}, it follows that the real number $\sqrt{2}$ is normal in base $b=10$.
\end{proof}

\section{Conditional Proof For The Normality Of Pi}\label{S7799}

The proof is based on the foundational results in Section \ref{S5588}, Section \ref{S7711}, the Weyl criterion in Section \ref{S2266}, and the Wall criterion stated in Theorem \ref{thm2270.150}.

\begin{proof}[\textbf{Proof}] {\bfseries (Theorem \ref{thm2200.400})} By Lemma \ref{lem5588.200}, almost every fractional part has the random rational approximation 
	\begin{equation}\label{eq7799.900}
		\frac{r_n}{2q}+\frac{1}{(2q)^{\mu-1}}+O\left(\frac{1}{q^{2}}\right)\leq \left \{ \pi10^n \right \}\leq  \frac{s_n}{q}+O\left(\frac{1}{q^{2}}\right),
	\end{equation}
	where $0\leq c_n,r_n,s_n\leq q_k$, and $\mu\geq2$. There are at most $O(\log q)$ exceptions, see Lemma \ref{lem5588.100}. The nonsymmetric inequalities \eqref{eq7799.900} are rewritten as
	
	\begin{equation}\label{eq7799.902}
		\left |\left \{ \pi10^n \right \} -\frac{r_n}{2q}-\frac{1}{(2q)^{\mu-1}}\right | \ll \frac{1}{q^{2}},
	\end{equation}
	and
	
	\begin{equation}\label{eq7799.904}
		\left |\left \{ \pi10^n \right \} -\frac{s_n}{q}\right | \ll\frac{1}{q^{2}},
	\end{equation}
	
	Therefore, by Lemma \ref{lem7770.800}, the corresponding exponentials pairing
	\begin{equation}\label{eq7799.910}
		e^{i2 \pi m\left ( \frac{r_n}{2q}+\frac{1}{(2q)^{\mu-1}}\right )}\asymp e^{i2 \pi m\left \{ \pi10^n \right \}},
	\end{equation}
	and
	\begin{equation}\label{eq7799.912}
		e^{i2 \pi m\left \{ \pi10^n \right \}}\asymp  e^{i2 \pi m \frac{s_n}{q}},
	\end{equation}
	where $m\ne0$, are proportionals. Similarly, the corresponding exponential sums
	\begin{equation}\label{eq7799.920}
		\sum_{n\leq q}e^{i2 \pi m\left ( \frac{r_n}{2q}+\frac{1}{(2q)^{\mu-1}}\right )}\asymp \sum_{n\leq q}e^{i2 \pi m\left \{ \pi10^n \right \}},
	\end{equation}
	and
	\begin{equation}\label{eq7799.922}
		\sum_{n\leq q}e^{i2 \pi m\left \{ \pi10^n \right \}}\asymp  \sum_{n\leq q}e^{i2 \pi m \frac{s_n}{q}},
	\end{equation}
	where $q= q_k+o(q_k)$ is a large prime such that $\gcd(p_kq_k+1,q)=1$, are proportionals. \\
	
	By Lemma \ref{lem7711.160} the subsets 
	\begin{equation}\label{eq7799.930}
		G=\{p_k10^n\equiv r_n \bmod q\} \quad \text{ and } \quad H=\{(p_kq_k+1)10^n\equiv s_n \bmod q\}
	\end{equation}
	are sufficiently large multiplicative subgroups of the finite field 
	$\mathbb{F}_q^{\times}$. In particular, conditional on the Artin primitive root conjecture, the cardinalities are $\# G = q-1\gg q^{\varepsilon}$, and $\# H = q-1\gg q^{\varepsilon}$, where 
	$\varepsilon>0$.\\
	
	By Theorem \ref{thm7700.460}, the exponential sums have nontrivial upper bounds
	\begin{equation}\label{eq7799.940}
		\sum_{n\leq q}e^{i2 \pi m\left ( \frac{r_n}{2q}+\frac{1}{(2q)^{\mu-1}}\right )}\asymp\sum_{n\leq q}e^{i2 \pi m\left \{ \pi10^n \right \}}\ll q^{1-\varepsilon},
	\end{equation}
	and
	\begin{equation}\label{eq7799.942}
		\sum_{n\leq q}e^{i2 \pi m\left \{ \pi10^n \right \}}\asymp  \sum_{n\leq q}e^{i2 \pi m\frac{s_n}{q}}\ll q^{1-\varepsilon}.
	\end{equation}
	By the Weyl criterion, see \cite[Theorem 2.1]{KN1974}, any of the expressions \eqref{eq7799.940} or \eqref{eq7799.942} is sufficient to prove the uniform distribution of the sequence $\{\pi 10^n:n \geq 1\}$. 
\end{proof}

Establishing the main result as an unconditional does not seem to be difficult, because proving the existence of infinitely many large multiplicative subgroups 
\begin{equation}\label{eq7799.950}
	\mathcal{H}=\{10^0,10^1,10^2,\ldots,10^n,\ldots\}\subseteq \left(\mathbb{Z}/q_k\mathbb{Z}\right)^{\times}  
\end{equation}

of cardinalities $\#\mathcal{H} \gg q^{\varepsilon}$ as $q_k \to \infty$, is not a difficult task, see Section \ref{S0025}. The techniques employed appear to be extendable to other irrational numbers $\alpha$ of finite irrationality measure $\mu(\alpha)\geq2$ and 
base $b=10$. The generalization to other bases $b\ne10$ seems to require significant additional works.

\section{Results for Multiplicative Nonlacunary Sequences}\label{S9900}
The proof provided here is based on the results for \textit{"multiplicative orbits"} of nonlacunary sequences \eqref{eq9900.903}. This proof seems to be depend on the arithmetic properties of the irrational numbers. Accordingly, this analysis seems to be well suited for structured irrational numbers such as algebraic irrational numbers, and nonLiouville numbers. This restriction circumvents the exceptional cases such as the Liouville type numbers 
\begin{equation}\label{eq9900.900}
	\alpha=	\sum_{ n\geq 1}\frac{1}{b^{ f(n)}},
\end{equation}  
where $b>1$ is a base, and $f(n)$ is a rapidly increasing function. In \cite[p.\ 49]{FH1967} there is a short discussion on an exceptional case, and confer \cite{MG2001} for some other exceptional cases. Other closely related problems are studied in \cite{AT2004}, \cite{DR2008}, et alii.

\begin{thm} \label{thm9900.900} Let $p\geq2$ and $q\geq2$ be multiplicative independent integers, and let $\alpha $ be a nonLiouville number. Suppose that the sequence of real numbers
	\begin{equation}\label{eq9900.903}
		\{p^nq^m\alpha:m,n\geq1\}.
	\end{equation}
	is dense in $\R/\Z$, then, the irrational number $\alpha $ is a normal number in both base $p$ and base $q$. 
\end{thm}

\begin{proof}[\textbf{Proof}] (i) Let $k=q^m$ be an integer parameter, and consider the sequence $\beta_n=p^n\alpha$, with $m,n\geq1$. \\
	
	It is sufficient to consider nonLiouville irrational number numbers $\alpha $. Otherwise, for Liouville numbers $\alpha $, equation \eqref{eq9900.902} is false since the $p$-adic expansion of the number $\alpha$ can have blocks of consecutive zeros of arbitrary lengths infinitely often as $x\to \infty$. So Liouville numbers are not normal in any base $p$, see Remark \ref{rmk9900.900}.\\
	
	Suppose that the nonLiouville irrational number $\alpha$ is not a normal number in base $p$. Equivalently, the Wall criterion
	\begin{equation}\label{eq9900.902}
		\frac{1}{x}\;	\sum_{ n\leq x}e^{i2 \pi k\beta_{n}}=\frac{1}{x}\;	\sum_{ n\leq x}e^{i2 \pi \left(p^nq^m\alpha\right )}=o(1) 
	\end{equation}
	any parameter $k=q^m>0$, as $x\to\infty$, is false, see Theorem \ref{thm2270.150}. \\
	
This implies that  
\begin{equation}\label{eq9900.910}
		\frac{1}{x}\;	\sum_{ n\leq x}e^{i2 \pi k\beta_{n}}=\frac{1}{x}\;	\sum_{ n\leq x}e^{i2 \pi \left(p^nq^m\alpha\right )}=\Omega_{\pm}(1)  
	\end{equation}
	as $x\to\infty$, see the definition of the symbol $o(1)$ in \eqref{eq2266N.100} and its negation in \eqref{eq2266N.115}.\\
	
	 Moreover, summing over the parameter $k=q^m\leq x$ yields
	\begin{eqnarray}\label{eq9900.920}
		\frac{1}{x}\;\sum_{ m\leq x} \frac{1}{x}\;	\sum_{ n\leq x}e^{i2 \pi k\beta_{n}}&=&\frac{1}{x}\;\sum_{ m\leq x}	\frac{1}{x}\;\sum_{ n\leq x}e^{i2 \pi \left(p^nq^m\alpha\right )}\\
		&=&\frac{1}{x}\;\sum_{ m\leq x}	\Omega_{\pm}(1) \nonumber\\
		&=&	\Omega_{\pm}(1) \nonumber
	\end{eqnarray}
	as $x\to\infty$. But, this contradicts the hypothesis that the sequence of real numbers
	\begin{equation}\label{eq9900.930}
		\{p^nq^m\alpha:m,n\geq1\}.
	\end{equation}
	is dense in $\R/\Z$. Specifically, by Lemma \ref{lem3377.750}, there is a rearrangement of this sequence 
	\begin{equation}\label{eq9900.935}
		\{\beta_{m,n}=p^{\sigma(n)}q^{\sigma(m)}\alpha:m,n\geq1\}
	\end{equation}	
	such that
	\begin{eqnarray}\label{eq9900.940}
		\frac{1}{x^2}\; \sum_{ m\leq x,} 	\sum_{ n\leq x}e^{i2 \pi h\beta_{m,n}}&=&\frac{1}{x^2}\;\sum_{ m\leq x,}	\sum_{ n\leq x}e^{i2 \pi h \left(p^{\sigma(n)}q^{\sigma(m)}\alpha\right )}\\&=&o(1)\nonumber,
	\end{eqnarray}
where $h\ne0$, as $x\to\infty$. Therefore, the hypothesis \eqref{eq9900.910} is false. It implies that the irrational number $\alpha$ is a normal number in base $p\geq2$.\\
	
	(ii) Since the sequence \eqref{eq9900.930} is symmetric in the bases $p$ and $q$, reversing the labels in (i) yields the same result for base $q\geq2$. 
\end{proof}

\begin{rmk}\label{rmk9900.900}{\normalfont 
		Note that for Liouville type numbers $\alpha$, the $p$-adic expansions have arbitrary large number of zeros, so the relation
		\begin{equation}\label{eq9900.950}
			\frac{1}{x}\;	\sum_{ n\leq x}e^{i2 \pi k\beta_{n}}=\frac{1}{x}\;	\sum_{ n\leq x}e^{i2 \pi \left(p^nq^m\alpha\right )}=\Omega_{\pm}(1) 
		\end{equation}
		any parameter $k=q^m>0$, as $x\to\infty$, is actually true, see Lemma \ref{lem3007.300}. Thus, Theorem \ref{thm9900.900} is restricted to a collection of nonLiouville numbers $\alpha$. For example, quadratic irrationals cannot have arbitrary large number of zeros in the decimal expansions since the irrationality measures of these irrational numbers is 2.	
	}
\end{rmk}
\begin{rmk}{\normalfont
		The dense sets $\{\theta^n:n\geq 1\}$ generated by Pisot and Salem numbers $\theta> 1$ have a single generators. These dense sets, which are ''exponential orbits'', have different properties than the dense sets generated by two or more generators, as in \eqref{eq9900.930}. These dense sets, which are "multiplicative orbits", and other closely related problems are studied in \cite{AT2004}, \cite{DR2008}, et alii. 
	}
\end{rmk}

\section{Unconditional Proof For The Normality Of $\sqrt{2}$}\label{S9905}
Theorem \ref{thm2200.300} is a corollary of the previous theorem. The details are shown below, and the statement is repeated here for convenience.\\

\textbf{Theorem 1.1} \textit{The irrational number $\sqrt{2} \in \R$ is a normal number in base $p\geq2$. In particular, $\sqrt{2}$ is simply normal number in base $10$. Hence, the decimal expansion $$\sqrt{2} = 1.4142135623730950488016887242096980785696718753769480731766797379\ldots$$ 
	contains infinitely many digit $0$, infinitely many digit $1$, infinitely many digit $2$, et cetera.}

\begin{proof}[\textbf{Proof}] Fix the pair of multiplicative independent integers $p=10$, $q=3$, and let $\alpha=\sqrt{2}$. By the Furstenberg criterion, see Theorem \ref{thm3377.800}, the set of real numbers
	\begin{equation}\label{eq9900.960}
		\{3^m10^n\alpha:m,n\geq1\}.
	\end{equation}
	is dense in $\R/\Z$. Therefore, by Theorem \ref{thm9900.900}, the irrational number $\sqrt{2}$ is normal in base $p=10$.
\end{proof}

\newpage
\section{Problems}\label{exe3505}
\subsection{Lacunary and Nonlacunary Sequences}
\begin{exe}\label{exe3505.200} {\normalfont
Show that the sequence $\{u_n=2^{n}+3^{n}:n\geq 1\}$	is lacunary.
}
\end{exe}

\begin{exe}\label{exe3505.205} {\normalfont
		Show that the sequence $\{u_{m,n}=2^{m}3^{n}:m,n\geq1 \}$ is nonlacunary.
	}
\end{exe}

\subsection{Powers of Polynomials Roots}

\begin{exe}\label{exe3505.405} {\normalfont
Which of these polynomials $x^3-x+1$, $x^3-x-2$, and $x^3+x+1$ has root which is a Pisot number?
	}
\end{exe}

\begin{exe}\label{exe3505.415} {\normalfont Let $\theta>1,\theta_1=\theta^{-1},\theta_2,\ldots,\theta_9$ be the roots of the Lehmer polynomial $x^{10}+x^9-x^7-x^6-x^5-x^4-x^3+x+1$. Show the the sequence of real numbers
$$x_n\equiv\theta^n+\theta_1^n+r_1^n\cos \omega_1n+\cdots+r_4^n\cos \omega_4n \mod 1,$$ where $\theta_i+\theta_{i+1}=r_1\cos \omega_1$,  $|\theta_i|=r_i\leq1$ and $\omega_i \in (-\pi,\pi)$ for $i\in \{1,2,3,4\}$, is dense in the unit interval $(0,1)$.
	}
\end{exe}


\vskip .21 in 
\currfilename.\\

\end{document}